\newcommand{\Rmnum}[1]{\expandafter\@slowromancap\romannumeral #1@}
 \newtheorem{lem}{Lemma}[section]  \newtheorem{thm}{Theorem}[section]
  \newtheorem{rmk}{Remark}[section] \newtheorem{example}{Example}[section]
\numberwithin{equation}{section}
\numberwithin{equation}{section}
\newcommand{\dif}{\mathrm{d}} \DeclareMathAlphabet{\mathsfsl}{OT1}{cmss}{m}{sl} \DeclareMathAlphabet{\mathpzc}{OT1}{pzc}{m}{it}
    \newcommand{\rr}{\mathbb{R}}
\newcommand{\pp}{\mathbb{P}}    
    \def\FF{\mathcal F}   \def\UU{\mathcal U}
 \def\BB{\mathscr B}
 \def\d"{^{\prime\prime}} \def\d'{^{\prime}}
\begin{document}

\title[]{Moderate deviations for empirical measures for nonhomogeneous Markov chains}
\thanks{The research is supported by Scientific Program of Department of Education of Jiang Xi Province of China (Nos. GJJ190732),  Doctor's  Scientific  Research Program of Jingdezhen Ceramic Institute (Nos. 102/01003002031 )} \subjclass[2000]{60F10,05C80}
\keywords{Moderate deviations; Nonhomogeneous Markov chains; Empirical measures; Martingale; Central limit theorem}
\date{} \maketitle

\begin{center}

Mingzhou Xu~\footnote{Email: mingzhouxu@whu.edu.cn}\quad Kun Cheng~\footnote{Email: chengkun0010@126.com}\\

 School of Information Engineering, Jingdezhen Ceramic Institute\\
  Jingdezhen 333403, China

\end{center}
\begin{abstract}
We prove that moderate deviations for empirical measures for countable nonhomogeneous Markov chains hold under the assumption of uniform convergence of transition probability matrices for countable nonhomogeneous Markov chains in Ces\`aro sense.
\end{abstract}

\section{Introduction and main results}
Large deviations for homogeneous Markov processes can be found in several papers of Donsker and Varadhan \cite{Donsker1975a,Donsker1975b,Donsker1976,Donsker1983}. The generalization to nonhomogeneous Markov chains was obtained by Dietz and Sethuraman \cite{DS2005}. For a survey of the theory of large deviations, the reader is referred to Varadhan \cite{Varadhan2008} and references therein. Gao \cite{Gao1992,Gao1996} studied moderate deviations for homogeneous Markov processes and martingales. Wu \cite{Wu1995,Wu2001} investigated moderate deviations and large deviations for homogeneous Markov processes. de Acosta \cite{deAcosta1997}, de Acosta and Chen \cite{deAcosta1998} obtained  moderate deviations for homogeneous Markov chains. Gao \cite{Gao2017} investigated moderate deviations for unbounded functional of homogeneous Markov processes. Central limit theorem and moderate deviation for empirical means for countable nonhomogeneous Markov chains were established by  Xu et al. \cite{Xu2019} (cf. also Xu et al. \cite{Xu2019b}). Large deviations for empirical measures for independent random variables under sublinear expectation were investigated by Gao and Xu \cite{Gao2012}.  Xu et al. \cite{Xu2020} proved moderate deviations for nonhomogeneous Markov chains with finite state space. It is interesting  to wonder whether or not moderate deviations for nonhomogeneous Markov chains with finite state space could be generalized to that for countable state space.  In this paper we try to investigate moderate deviations for empirical measures for countable nonhomogeneous Markov chains under the condition of uniform convergence of transition probability matrices for countable nonhomogeneous Markov chains in Ces\`aro sense, which complement the results of Xu et al. \cite{Xu2020}.

Assume that $\{X_n,n\ge 0\}$ defined on the probability space $(\Omega,\FF,\pp)$ is a nonhomogeneous Markov chain taking values in  $S=\{1,2,\ldots\}$ with initial probability
\begin{equation}\label{1}
\mu^{(0)}=(\mu(1),\mu(2),\ldots)
\end{equation}
and the transition matrices
\begin{equation}\label{2}
P_n=(p_n(i,j)),\mbox{  }i,j\in S, n\ge 1,
\end{equation}
where $p_n(i,j)=\pp(X_n=j|X_{n-1}=i)$. Set
$$
P^{(m,n)}=P_{m+1}P_{m+2}\cdots P_{n}, p^{(m,n)}(i,j)=\pp(X_n=j|X_m=i),  \mbox{  for $n>m$,}
$$
$$
\mu^{(k)}=\mu^{(0)}P_1P_2\cdots P_k, \mu^{(k)}(j)=\pp(X_k=j).
$$When the Markov chain is homogeneous,  we denote $P_n$, $P^{(m,m+k)}$ by $P$,  $P^k$ respectively.

If $P$ is a stochastic matrix, then we set
$$
\delta(P)=\sup_{i,k}\sum_{j=1}^{\infty}[p(i,j)-p(k,j)]^{+},
$$
where $[a]^{+}=\max\{0,a\}$.

Let $A=(a_{ij})$ be a matrix defined as $S\times S$. Set
$$
\|A\|=\sup_{i\in S}\sum_{j\in S}|a_{ij}|.
$$

If $h=(h_1,h_2,\ldots)$ , then we set $\|h\|=\sum_{j\in S}|h_j|$. If $g=(g_1,g_2,\ldots)'$ , then we set $\|g\|=\sup_{i\in S}|g_i|$. The properties below hold (see Yang \cite{Yang2002,Yang2009}):

(a)  $\|AB\|\le \|A\|\|B\|$ for all matrices $A$ and $B$;

(b) $\|P\|=1$ for all stochastic matrix $P$. 

The sequence $\{P_n,n\ge 1\}$ is said to uniformly converge in the Ces\`{a}ro sense (to a constant stochastic matrix $R$) if
\begin{equation}\label{3}
\lim_{n\rightarrow\infty}\sup_{m\ge 0}\left\|\sum_{t=1}^{n}P^{(m,m+t)}/n-R\right\|=0.
\end{equation}

For understanding notations readers could see Xu et al. \cite{Xu2020}.

 $S$ is divided into $d$ disjoint subspaces $C_0$, $C_1$, $\ldots$, $C_{d-1}$, by an irreducible stochastic matrix $P$, of period $d$ ($d\ge 1$) (see Theorem 3.3 of Hu \cite{Hu1983} ), and $P^d$ gives $d$ stochastic matrices $\{T_l,0\le l\le d-1\}$, where $T_l$ is defined on $C_l$. As in Bowerman, et al. \cite{Bowerman1977} and Yang \cite{Yang2002}, we shall discuss such an irreducible stochastic matrix $P$, of period $d$  that $T_l$ is strongly ergodic for $l=0,1,\ldots,d-1$. This matrix will be called periodic strongly ergodic.

 Throughout this paper, $E(\cdot)$ is the expectation under probability measure $\pp$, $R$ is a constant stochastic matrix each row of which is the left eigenvector $\pi=(\pi(1),\pi(2),\ldots)$ of $P$ satisfying $\pi P=\pi$ and $\sum_{i}\pi(i)=1$. By Lemma 1 in Yang \cite{Yang2009}, if (\ref{4}) below holds, then
(\ref{3}) holds.
We need the following results obtained by Xu et al. \cite{Xu2019} (cf. also Xu et al. \cite{Xu2019b}) .
\begin{thm}\label{thm1} Suppose that $\{X_n,n\ge0\}$ is a countable nonhomogeneous Markov chain taking values in $S=\{1,2,\ldots\}$ with initial distribution of (\ref{1}) and transition matrices of (\ref{2}). Assume that $f$ is a real function satisfying $|f(x)|\le M$ for all $x\in \rr$. Suppose that $P$ is a periodic strongly ergodic stochastic matrix. Assume that
\begin{equation}\label{4}
\lim_{n\rightarrow\infty}\sup_{m\ge 0}\frac1n\sum_{k=1}^{n}\|P_{k+m}-P\|=0,
\end{equation}
and
\begin{equation}\label{5}
\theta(f):=\sum_{i\in S}\pi(i)[f^2(i)-(\sum_{j\in S}f(j)p(i,j))^2]>0.
\end{equation}
Moreover, if the sequence of $\delta$-coefficient satisfies
\begin{equation}\label{6}
\lim_{n\rightarrow\infty}\frac{\sum_{k=1}^{n}\delta(P_k)}{\sqrt{n}}=0,
\end{equation}
then we have
\begin{equation}\label{7}
\frac{S_n-E(S_n)}{\sqrt{n\theta(f)}}\stackrel{D}{\Rightarrow}N(0,1)
\end{equation}
where $S_n=\sum_{k=1}^{n}f(X_k)$, $\stackrel{D}{\Rightarrow}$ stands for the convergence in distribution.
\end{thm}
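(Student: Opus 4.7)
The plan is to apply a martingale central limit theorem after decomposing $S_n-E(S_n)$ into a bounded-increment martingale and a mixing-type remainder. Let $\FF_k=\sigma(X_0,\ldots,X_k)$, write $g_k(i):=\sum_{j\in S}f(j)p_k(i,j)$, and set $Y_k:=f(X_k)-E[f(X_k)\mid\FF_{k-1}]=f(X_k)-g_k(X_{k-1})$. Then $M_n:=\sum_{k=1}^n Y_k$ is a martingale with $|Y_k|\le 2M$, and
\begin{equation*}
S_n-E(S_n)=M_n+R_n,\qquad R_n:=\sum_{k=1}^{n}\bigl[g_k(X_{k-1})-E(g_k(X_{k-1}))\bigr].
\end{equation*}
The goal is to show $M_n/\sqrt{n\theta(f)}\Rightarrow N(0,1)$ and $R_n/\sqrt{n}\to 0$ in probability, after which Slutsky's lemma delivers (\ref{7}).

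For the martingale term I would invoke the martingale CLT (e.g.\ Hall--Heyde). The Lindeberg condition is immediate from $|Y_k|\le 2M$, so the only real work is to verify
\begin{equation*}
\frac{1}{n}\sum_{k=1}^{n}E[Y_k^2\mid\FF_{k-1}]=\frac{1}{n}\sum_{k=1}^{n}\Bigl[\sum_{j\in S}f^2(j)p_k(X_{k-1},j)-g_k(X_{k-1})^2\Bigr]\xrightarrow{\pp}\theta(f).
\end{equation*}
I would first use (\ref{4}) and $|f|\le M$ to replace $p_k$ by the limit $p$ with a Ces\`aro error that tends to zero uniformly in $m$, and then use the periodic strong ergodicity of $P$ (which, via Bowerman--Yang-type estimates, yields $n^{-1}\sum_{k=1}^{n}p^{(0,k)}(i,\cdot)\to\pi$ in the $\|\cdot\|$-norm uniformly in the starting state) to replace the empirical distribution of $\{X_{k-1}\}$ by $\pi$. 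The resulting limit is $\sum_i\pi(i)[\sum_j f^2(j)p(i,j)-(\sum_j f(j)p(i,j))^2]$, and the identity $\sum_i\pi(i)\sum_j f^2(j)p(i,j)=\sum_j f^2(j)\pi(j)=\sum_i\pi(i)f^2(i)$ coming from $\pi P=\pi$ converts this into exactly $\theta(f)$ in (\ref{5}).

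For the remainder, each summand $g_k(X_{k-1})-E(g_k(X_{k-1}))$ is a bounded centred function of $X_{k-1}$, and its correlation with $g_{k'}(X_{k'-1})$ across time is controlled by Dobrushin's contraction inequality in terms of the coefficients $\delta(P_l)$. A standard covariance estimate then produces a bound of the form
\begin{equation*}
\var(R_n)\le C\sum_{k=1}^{n}\sum_{l=k+1}^{n}\prod_{t=k+1}^{l}\bigl(1-\text{something involving }\delta(P_t)\bigr)+Cn,
\end{equation*}
which, together with the hypothesis (\ref{6}) $\sum_{k=1}^{n}\delta(P_k)=o(\sqrt n)$, is readily seen to be $o(n)$; Chebyshev then gives $R_n/\sqrt n\to 0$ in probability.

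The main obstacle is the convergence of the conditional quadratic variation. Unlike the homogeneous or finite-state case, we cannot directly appeal to an ergodic theorem on a countable state space with nonhomogeneous transitions; we must simultaneously average out the perturbation $P_k\to P$ (using (\ref{4})) and the non-stationarity of the law of $X_{k-1}$ (using the periodic strong ergodicity of $P$), while ensuring uniform control over the countable state space so that the sums $\sum_j f^2(j)p_k(X_{k-1},j)$ remain well-behaved. This is essentially where the Ces\`aro-type uniform convergence hypothesis (\ref{4}) is doing its work, and is the step that most distinguishes the countable-state result from its finite-state predecessor in Xu et al.\ \cite{Xu2020}.
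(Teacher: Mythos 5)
You should first note that the paper itself does not prove Theorem \ref{thm1}: it is imported from Xu et al.\ \cite{Xu2019,Xu2019b}, and the only visible traces of its proof are the ingredients quoted later as (\ref{2.1})--(\ref{2.3}) inside the proof of Lemma \ref{lem1}. Your skeleton --- write $S_n-E(S_n)=M_n+R_n$ with the martingale $M_n=\sum_{k\le n}\{f(X_k)-E[f(X_k)\mid X_{k-1}]\}$, apply a martingale CLT to $M_n$, and show $R_n=o(\sqrt n)$ --- is exactly the skeleton of that cited proof, so in outline you are on the same route.

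There is, however, a concrete gap in your treatment of $R_n$. The bound you display, $\var(R_n)\le C\sum_{k}\sum_{l>k}\prod_{t}\bigl(1-\cdots\bigr)+Cn$, cannot deliver the conclusion: even if the double sum vanished, $\var(R_n)=O(n)$ together with Chebyshev only gives tightness of $R_n/\sqrt n$, not $R_n/\sqrt n\to 0$, so ``readily seen to be $o(n)$'' is unjustified as written (and the contraction factor for the chain is $\prod_t\delta(P_t)$, not $\prod_t(1-\cdots)$, with the paper's definition of $\delta$). More to the point, the covariance machinery misses where hypothesis (\ref{6}) actually acts: since $g_k(i)-g_k(i')=\sum_j f(j)\bigl(p_k(i,j)-p_k(i',j)\bigr)$ and $\sum_j|p_k(i,j)-p_k(i',j)|\le 2\delta(P_k)$, one has the pointwise bound $|g_k(X_{k-1})-E\,g_k(X_{k-1})|\le 2M\delta(P_k)$, hence $|R_n|\le 2M\sum_{k\le n}\delta(P_k)=o(\sqrt n)$ surely by (\ref{6}); this deterministic control is precisely what is encoded in (\ref{2.2}). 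Separately, the convergence of the conditional quadratic variation (in probability, not merely in mean as in (\ref{2.3})) is the genuinely hard countable-state step, and your proposal only gestures at it: ``replace the empirical distribution of $X_{k-1}$ by $\pi$'' requires a law-of-large-numbers argument for the nonhomogeneous chain under (\ref{4}) (Yang-type Ces\`aro ergodicity), with control uniform over the countable state space; as it stands this is a statement of intent rather than a proof, and it is the part for which you would in effect have to reproduce the analysis of Xu et al.\ \cite{Xu2019}.
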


Throughout this paper, we assume that
\begin{equation}\label{8}
\lim_{n\rightarrow\infty}\frac{a(n)}{\sqrt{n}}=\infty,\lim_{n\rightarrow\infty}\frac{a(n)}{n}=0.
\end{equation}
\begin{thm}\label{thm2} Under the hypotheses of Theorem \ref{thm1},
then for each open set $G\subset \rr$,
$$
\lim_{n\rightarrow\infty}\frac{n}{a^2(n)}\log\pp \left\{\frac{S_n-E(S_n)}{a(n)}\in G\right\}\ge -\inf_{x\in G}I(x),
$$
and for each closed set $F\subset \rr$,
$$
\lim_{n\rightarrow\infty}\frac{n}{a^2(n)}\log\pp \left\{\frac{S_n-E(S_n)}{a(n)}\in F\right\}\le -\inf_{x\in F}I(x),
$$
where $I(x):=\frac{x^2}{2\theta(f)}$.
\end{thm}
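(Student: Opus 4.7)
The plan is to derive the MDP via a martingale approximation to $S_n - E(S_n)$ based on the Poisson equation of the limiting chain, and then invoke the MDP for bounded-difference martingales of Gao \cite{Gao1996}. Because $P$ is periodic strongly ergodic and $|f| \le M$, the geometric convergence of $P^n$ to the stationary projection makes the series $g(i) := \sum_{n \ge 0} [(P^n f)(i) - \pi(f)]$ converge uniformly on $S$ to a bounded solution of the Poisson equation $g - Pg = f - \pi(f)$ with $\|g\|_\infty \le C M$. Setting $\FF_k = \sigma(X_0, \ldots, X_k)$, I then introduce the $\FF_n$-martingale $M_n := \sum_{k=1}^n \psi_k$ with $\psi_k := g(X_k) - (P_k g)(X_{k-1})$ and $|\psi_k| \le 2\|g\|_\infty$; telescoping $g(X_k) - g(X_{k-1})$ and applying the Poisson equation yield the decomposition
\begin{equation*}
S_n - E(S_n) = M_n + (E_n - E(E_n)) + B_n,
\end{equation*}
where $B_n = O(1)$ almost surely and $E_n := \sum_{k=1}^n ((P_k - P) g)(X_{k-1})$.

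Next I would show that the remainder $E_n - E(E_n)$ is super-exponentially negligible at the MDP scale $n/a^2(n)$. The standard orthogonal martingale decomposition writes $E_n - E(E_n) = \sum_{j=0}^{n-1} D_j$ with $\FF_j$-martingale differences $D_j = \sum_{k > j}\{E[((P_k - P) g)(X_{k-1}) \mid \FF_j] - E[((P_k - P) g)(X_{k-1}) \mid \FF_{j-1}]\}$. The strong ergodicity of $P$ supplies a geometric contraction $\sup_x |(P^{(j,k-1)} h_k)(x) - \pi(h_k)| \le C \rho^{k-1-j} \|h_k\|_\infty$ for $h_k := (P_k - P) g$, yielding $|D_j| \le C \|g\|_\infty \sum_{k > j} \rho^{k-1-j} \|P_k - P\|$. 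Azuma--Hoeffding then produces a tail of the form $\exp(-c \epsilon^2 a^2(n) / \Sigma_n)$ with $\Sigma_n \le C' \sum_{k=1}^n \|P_k - P\|^2$, and since $\|P_k - P\| \le 2$ together with (\ref{4}) imply $\Sigma_n/n \to 0$, this exponent dominates $a^2(n)/n$. The bounded piece $B_n$ is trivially negligible.

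Finally, I apply Gao's martingale MDP to $M_n$. The uniform bound on the $\psi_k$ makes the Lindeberg condition automatic, so it remains to verify $\langle M \rangle_n / n \to \theta(f)$ super-exponentially at scale $n/a^2(n)$. The quadratic characteristic $\langle M \rangle_n = \sum_{k=1}^n [(P_k g^2)(X_{k-1}) - ((P_k g)(X_{k-1}))^2]$ splits into its $P$-analogue plus a nonhomogeneity error that is super-exponentially small by the same argument as above, while the Poisson identity $P g = g - f + \pi(f)$ together with strong ergodicity identifies $\lim_n \frac{1}{n} \sum_{k=1}^n [(P g^2)(X_{k-1}) - ((P g)(X_{k-1}))^2] = \theta(f)$. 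Gao's result then delivers the MDP for $M_n$ with rate function $x^2/(2\theta(f))$, and combining with the negligibility above gives the MDP for $S_n - E(S_n)$ at speed $a^2(n)/n$ with the same rate.

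The main obstacle is the super-exponential control of the nonhomogeneity error in the second step: hypothesis (\ref{4}) only provides averaged first-moment control of $\|P_k - P\|$, so the proof must convert the remainder into a martingale and combine Azuma--Hoeffding with both the trivial bound $\|P_k - P\| \le 2$ and the geometric contraction from strong ergodicity to obtain an exponent dominating $a^2(n)/n$; a parallel argument is then needed to verify the super-exponential convergence of $\langle M \rangle_n/n$ to $\theta(f)$ in the final step.
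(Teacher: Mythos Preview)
The paper's route (Theorem~\ref{thm2} is quoted from \cite{Xu2019,Xu2019b}, but the argument reappears in the proof of Lemma~\ref{lem1} with $m=1$) is far more direct and hinges on hypothesis~(\ref{6}), which you never invoke. One takes the \emph{one-step} martingale differences $D_k=f(X_k)-E[f(X_k)\mid X_{k-1}]$, writes $S_n-E(S_n)=W_n+R_n$ with $W_n=\sum_k D_k$ and $R_n=\sum_k\bigl(E[f(X_k)\mid X_{k-1}]-E[f(X_k)]\bigr)$, and observes that $|R_n|\le 2M\sum_{k\le n}\delta(P_k)=o(\sqrt{n})$ \emph{deterministically} by~(\ref{6}); no Azuma estimate is needed for the remainder. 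Then, as in the proof of Lemma~\ref{lem1}, one shows $\lim_n\frac{n}{a^2(n)}\log E\exp\bigl(\tfrac{a(n)}{n}zW_n\bigr)=\tfrac{z^2}{2}\theta(f)$ using the bounded increments together with $\tfrac1n\sum_k E(D_k^2)\to\theta(f)$ (this is~(\ref{2.3})), and finishes with G\"artner--Ellis. Note that $\theta(f)$ is precisely the asymptotic variance of these one-step differences, not the Green-function variance one would obtain from a Poisson-equation martingale.

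Your approach has two concrete gaps under the stated hypotheses. First, $P$ is assumed only \emph{periodic} strongly ergodic; for period $d>1$ the powers $P^n$ do not converge, so the series $g=\sum_{n\ge0}(P^nf-\pi(f))$ diverges and your bounded Poisson solution need not exist as written. Second, your control of $E_n-E(E_n)$ and of $\langle M\rangle_n/n$ both rest on a geometric contraction $\sup_x|(P^{(j,k-1)}h)(x)-\pi(h)|\le C\rho^{k-1-j}\|h\|$ for the \emph{nonhomogeneous} products $P^{(j,k-1)}=P_{j+1}\cdots P_{k-1}$; neither~(\ref{4}) nor~(\ref{6}) supplies a uniform Dobrushin coefficient below~$1$ for the $P_k$, so this bound is unjustified. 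Moreover, the variance your martingale $M_n$ would deliver is $\pi(g^2)-\pi((Pg)^2)$, which does not in general equal $\theta(f)$; that identification succeeds here only because~(\ref{6}) kills the remainder at the CLT scale, a fact your argument does not use.
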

 $M(\rr)$ is the space of finite signed measures on $\rr$, $\tau$ denotes the topology of $\tau$-convergence on the space $M(\rr)$, i.e., $\tau$ is the topology generated by the mappings $\{\nu\in M(\rr)\mapsto \langle f,\nu\rangle, f\in \BB_ b(\rr)\}$. $M(\rr)$ is endowed with $\sigma$-fields $D_s$ generated by these mappings hereafter.  $I_{\mu}(\nu)$ in Theorem \ref{thm3} is the rate function for moderate deviations for empirical measures of countable nonhomogeneous Markov chains in Theorem \ref{thm4}. By the same method of the proof of Theorem 1.2 in Xu et al. \cite{Xu2020}, we have the following result.
\begin{thm}\label{thm3}For all $\nu\in M(\rr)$, we define
\begin{align*}
I_\mu(\nu)&:=\sup_{f\in \BB_b(\rr)}\left\{\langle f,\nu\rangle-\frac{1}{2}\sum_{i\in S}\pi(i)[f^2(i)-(\sum_{j\in S}f(j)p(i,j))^2]\right\}\\
&=\sup_{f\in \BB_b(\rr)}\left\{\langle f,\nu\rangle-\frac{1}{2}\sum_{i\in S}\sum_{j\in S}\pi(i)p(i,j)[f(j)-\sum_{k\in S}p(i,k)f(k)]^2\right\},
\end{align*}
where $\langle f,\nu\rangle=\int_{\rr}f(x)\nu(dx)$, $\BB_b(\rr)$ denotes the linear space of bounded measurable functions.
For any $l\in (0,\infty)$, $\{\nu\in M(\rr), I_{\mu}(\nu)\le l\}$ is $\tau$-compact.
\end{thm}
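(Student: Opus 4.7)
The strategy is to exploit the Legendre--transform structure of $I_\mu$ together with a quadratic bound obtained from test functions of indicator type; this is precisely the approach used for Theorem 1.2 of \cite{Xu2020}, adapted to the countable state space.

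Since $I_\mu$ is defined as the supremum over $f\in\BB_b(\rr)$ of the maps $\nu\mapsto\langle f,\nu\rangle-c(f)$, which are affine (in fact $\tau$-continuous) in $\nu$, the functional $I_\mu:M(\rr)\to[0,\infty]$ is automatically convex and $\tau$-lower semicontinuous. Consequently, the sublevel set $K_l:=\{\nu\in M(\rr):I_\mu(\nu)\le l\}$ is convex and $\tau$-closed, and the whole issue is $\tau$-relative compactness.

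The heart of the proof will be a quadratic estimate obtained by specializing to indicator test functions. Plugging $f=\lambda\mathbf{1}_B$, for a Borel set $B\subset\rr$ and $\lambda\in\rr$, into the second (manifestly nonnegative) expression for $I_\mu$ gives
$$\lambda\,\nu(B)-\frac{\lambda^2}{2}\,\sigma^2(\mathbf{1}_B)\le I_\mu(\nu),\qquad \sigma^2(\mathbf{1}_B):=\sum_{i,j\in S}\pi(i)p(i,j)\bigl[\mathbf{1}_B(j)-p(i,B)\bigr]^2.$$
A short expansion using $\pi P=\pi$ produces $\sigma^2(\mathbf{1}_B)=\pi(B\cap S)-\sum_i\pi(i)p(i,B)^2\le\pi(B\cap S)\le 1$. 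Optimizing over $\lambda\in\rr$ then yields the master bound
$$|\nu(B)|\le\sqrt{2\,I_\mu(\nu)\,\pi(B\cap S)}\qquad\text{for every Borel }B\subset\rr.\qquad(\ast)$$
In particular, whenever $B$ is disjoint from $S$ one has $\nu(B)=0$, so every $\nu\in K_l$ is concentrated on the countable set $S$.

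From $(\ast)$ I will read off the two ingredients needed for weak compactness in $M(\rr)$. Taking $B=\rr$ and passing to the Hahn--Jordan decomposition of $\nu$ gives the uniform total-variation bound $|\nu|(\rr)\le 2\sqrt{2l}$ on $K_l$; applying $(\ast)$ separately to the positive and negative parts of the Hahn decomposition gives $|\nu|(B)\le 2\sqrt{2l\,\pi(B\cap S)}$ for every Borel $B$. Since $\pi$ is a probability measure supported on the countable set $S$, for any decreasing sequence $B_n\downarrow\emptyset$ one has $\pi(B_n\cap S)\to 0$, hence $\sup_{\nu\in K_l}|\nu|(B_n)\to 0$, which is the uniform countable additivity condition. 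The classical Bartle--Dunford--Schwartz characterization of weak compactness in spaces of signed measures then delivers $\tau$-relative compactness of $K_l$, and combined with the $\tau$-closedness already established we obtain $\tau$-compactness. The only genuinely computational step is the variance identity leading to $\sigma^2(\mathbf{1}_B)\le\pi(B\cap S)$; once that is in hand, all remaining steps are soft functional-analytic arguments, which is why the main obstacle to writing out the proof is essentially accounting, not substance.
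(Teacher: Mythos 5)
Your argument is correct, and it is essentially a completed version of what the paper only asserts: the paper gives no proof of this theorem, saying it follows ``by the same method'' as Theorem 1.2 of \cite{Xu2020}, where the state space is finite. In that finite-state setting the bound you derive, $|\nu(B)|\le\sqrt{2\,I_\mu(\nu)\,\pi(B\cap S)}$, already forces the sublevel set into a fixed finite-dimensional, bounded, closed set, so compactness is immediate and no weak-compactness criterion is needed; the genuinely new ingredient for countable $S$ is exactly the step you supply, namely uniform absolute continuity of $\{|\nu|:\nu\in K_l\}$ with respect to the probability measure $\pi$ (hence uniform countable additivity) plus the Bartle--Dunford--Schwartz/Dunford--Pettis criterion, followed by the observation that $\sigma(M(\rr),\BB_b(\rr))$ is a coarser Hausdorff topology than the Banach weak topology, so weak compactness passes to $\tau$-compactness and combines with the $\tau$-closedness coming from lower semicontinuity. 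Two small points you should make explicit when writing this out: (i) the optimization over $\lambda$ requires the separate (trivial) case $\sigma^2(\mathbf{1}_B)=0$, where $\lambda\nu(B)\le I_\mu(\nu)<\infty$ for all $\lambda$ forces $\nu(B)=0$; and (ii) the equality of the two expressions defining $I_\mu$ is itself part of the statement, so the expansion using $\pi P=\pi$ that you perform for $f=\mathbf{1}_B$ should be recorded once for general bounded $f$ (it is the identity $\theta(f)=\sum_{i,j}\pi(i)p(i,j)[f(j)-\sum_k p(i,k)f(k)]^2$, with all sums finite because $f$ is bounded). With these details filled in, your plan yields a complete proof, and one that is arguably more self-contained than the citation the paper relies on.
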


Set
$$
\Gamma_n(B):=\frac{1}{a(n)}\left(\delta_{X_1}(B)+\cdots+\delta_{X_n}(B)\right), B\in\BB(\rr)
$$
$$
c_n\{A\}:=\pp\left\{\Gamma_n-E(\Gamma_n)\in A\right\}, A\subset M(\rr),
$$
where $\delta_x(\cdot)$ is the dirac measure, $\BB(\rr)$ is the Borel $\sigma$-field.

Inspired by Theorems \ref{thm1}, \ref{thm2}, Our main result is the following.
\begin{thm}\label{thm4}Suppose that $\{X_n,n\ge0\}$ is a countable nonhomogeneous Markov chain taking values in $S=\{1,2,\ldots\}$ with initial distribution of (\ref{1}) and transition matrices of (\ref{2}). Let $P$ be a periodic strongly ergodic stochastic matrix. Assume that (\ref{4}) and (\ref{6}) hold.
Then $\{c_n,n\ge1\}$ satisfies moderate deviation principle in $(M(\rr), \tau)$ with speed $\frac{a^2(n)}{n}$ and with
good rate function $I_{\mu}(\cdot)$, i.e., Large deviation lower bounds
$$
\lim_{n\rightarrow \infty}\frac{n}{a^2(n)}\log c_n\{G\}\ge -\inf_{\nu\in G}I_{\mu}(\nu), \mbox{ for any open set $G\subset M(\rr)$}
$$
and Large deviation upper bounds
$$
\lim_{n\rightarrow \infty}\frac{n}{a^2(n)}\log c_n\{F\}\le -\inf_{\nu\in F}I_{\mu}(\nu), \mbox{ for any closed set $F\subset M(\rr)$}
$$
holds.
\end{thm}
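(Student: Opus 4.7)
My plan is to deduce Theorem \ref{thm4} via the Dawson--G\"artner projective limit theorem: use the scalar MDP of Theorem \ref{thm2} as input to build a joint MDP on every finite-dimensional projection of $\Gamma_n-E\Gamma_n$, lift to $(M(\rr),\tau)$ through the projective-limit structure inherent in the definition of $\tau$, and identify the resulting rate function with $I_\mu$ by Theorem \ref{thm3} (which simultaneously provides goodness). This follows the same template used by Xu et al.\ \cite{Xu2020} in the finite-state case; the work specific to countable $S$ is already encapsulated in Theorems \ref{thm1}--\ref{thm3}.

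\emph{Step 1 (finite-dimensional joint MDP).} Fix $f_1,\dots,f_k\in\BB_b(\rr)$. For every $\lambda\in\rr^k$ the linear combination $f_\lambda:=\sum_{i=1}^k\lambda_i f_i$ is still bounded and measurable, and $\langle f_\lambda,\Gamma_n-E\Gamma_n\rangle=(S_n(f_\lambda)-ES_n(f_\lambda))/a(n)$. Theorem \ref{thm2} applied to $f_\lambda$ therefore gives an MDP at speed $a^2(n)/n$ with rate $x^2/(2\theta(f_\lambda))$, so the limiting normalized logarithmic moment generating function in the direction $\lambda$ equals $\tfrac12\theta(f_\lambda)$, a finite, essentially smooth quadratic form on $\rr^k$. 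The G\"artner--Ellis theorem then yields a joint MDP for the vector $(\langle f_i,\Gamma_n-E\Gamma_n\rangle)_{i=1}^k$ with rate function
$$
J_{f_1,\dots,f_k}(x)=\sup_{\lambda\in\rr^k}\Bigl\{\langle\lambda,x\rangle-\tfrac12\theta(f_\lambda)\Bigr\}.
$$
The degenerate directions $\theta(f_\lambda)=0$, where hypothesis (\ref{5}) of Theorem \ref{thm1} fails, are covered by perturbation: apply Theorem \ref{thm2} to $f_\lambda+\varepsilon f_0$ for an auxiliary $f_0$ with $\theta(f_0)>0$ and let $\varepsilon\downarrow 0$, or observe that $\var(S_n(f_\lambda))/n\to 0$ directly forces super-exponential concentration at the origin.

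\emph{Step 2 (lifting and identification).} Since $\tau$ is, by definition, the initial topology for the evaluations $\nu\mapsto\langle f,\nu\rangle$, $f\in\BB_b(\rr)$, the space $(M(\rr),\tau)$ sits inside the projective limit of its finite cylindrical images. Combining Step 1 with the Dawson--G\"artner theorem (see, e.g., Dembo--Zeitouni, Theorem 4.6.1), $\{c_n\}$ obeys an MDP in $(M(\rr),\tau)$ at speed $a^2(n)/n$ with rate
$$
I(\nu)=\sup_{\{f_1,\dots,f_k\}\subset\BB_b(\rr)}J_{f_1,\dots,f_k}(\langle f_1,\nu\rangle,\dots,\langle f_k,\nu\rangle).
$$
Interchanging the two suprema and absorbing the scalar vector $\lambda$ into a single test function $g:=\sum_i\lambda_i f_i\in\BB_b(\rr)$ collapses this to $\sup_{g\in\BB_b(\rr)}\{\langle g,\nu\rangle-\tfrac12\theta(g)\}$, which is exactly $I_\mu(\nu)$ by Theorem \ref{thm3}. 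Goodness of $I_\mu$ on $(M(\rr),\tau)$ is furnished by the same theorem, so no separate exponential tightness is needed for the upper bound.

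\emph{Main obstacle.} The delicate point is the degenerate regime $\theta(f_\lambda)=0$ in Step 1, since Theorem \ref{thm2} is stated only under the strict positivity hypothesis (\ref{5}). Making the perturbation/variance argument rigorous, with enough uniformity in $\lambda$ to feed G\"artner--Ellis globally on $\rr^k$, is where I expect to spend most of the technical effort; the Dawson--G\"artner lifting and the Legendre-duality identification of $I_\mu$ are then standard.
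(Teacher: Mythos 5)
Your overall architecture (finite-dimensional MDP via G\"artner--Ellis, then lift to $(M(\rr),\tau)$ and identify the rate with $I_\mu$ via Theorem \ref{thm3}) is the same as the paper's, but two of your steps have genuine gaps. First, in Step 1 you infer convergence of the normalized log-moment generating function to $\tfrac12\theta(f_\lambda)$ \emph{from} the scalar MDP of Theorem \ref{thm2}. An MDP does not by itself yield convergence of $\frac{n}{a^2(n)}\log E\exp\bigl(\frac{a(n)}{n}(S_n(f_\lambda)-ES_n(f_\lambda))\bigr)$; you would need Varadhan's lemma for the unbounded function $x\mapsto\lambda x$ together with a verified uniform exponential-moment condition (the crude bound $|S_n-ES_n|\le 2Mn$ is not enough; one needs an Azuma-type bound on the martingale part). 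The paper avoids this circularity: its Lemma 2.1 computes the log-MGF limit \emph{directly} from the martingale decomposition $\sum_l z_l f_l(X_k)=D_{k,\cdot}+$ conditional means, using (2.15), (2.17) of Xu et al.\ \cite{Xu2019} and the argument for (2.6) of Xu et al.\ \cite{Xu2020}, and only then applies G\"artner--Ellis. This also makes your ``main obstacle'' disappear: the limit $\tfrac12\theta(f_\lambda)$ is obtained whether or not $\theta(f_\lambda)>0$, so hypothesis (\ref{5}) is never needed and no perturbation is required. Note also that your fallback claim that $\var(S_n(f_\lambda))=o(n)$ ``directly forces super-exponential concentration'' is false as stated: Chebyshev gives only polynomial decay, whereas you need decay at scale $e^{-c\,a^2(n)/n}$; the exponential martingale estimate that would repair this is exactly what already gives the log-MGF limit.

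Second, the Dawson--G\"artner lifting in Step 2 is not as automatic as you suggest. $M(\rr)$ is a proper subset of the projective limit generated by the maps $\nu\mapsto(\langle f_1,\nu\rangle,\dots,\langle f_k,\nu\rangle)$ (that limit is essentially $\rr^{\BB_b(\rr)}$); Dawson--G\"artner gives an LDP there, and to restrict it to $(M(\rr),\tau)$ you must show the projective-limit rate is $+\infty$ at every functional that is not linear, and, more delicately, not countably additive (this uses $\theta(f_n)\to 0$ for bounded $f_n\downarrow 0$ and is essentially the content of Theorem \ref{thm3} / Xu et al.\ \cite{Xu2020}); you assert the embedding but never perform this identification. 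The paper's written proof takes a different, self-contained route at this point: Lemma 2.2 identifies $I_{f,m}$ as the contraction $\inf_{p_{f_1,\dots,f_m}(\nu)=y}I_\mu(\nu)$ by two applications of Fenchel duality, the upper bound for closed $F$ then follows from the $\tau$-compactness of the level sets (Theorem \ref{thm3}) together with $F=\bigcap p_{f_1,\dots,f_m}^{-1}\bigl(\overline{p_{f_1,\dots,f_m}(F)}\bigr)$, and the lower bound from the neighborhood basis $U_{f_1,\dots,f_m}(\nu_0,\varepsilon)$. So your plan can be made to work, but as written the passage MDP $\Rightarrow$ log-MGF convergence, the treatment of degenerate directions, and the restriction of the projective-limit LDP to $M(\rr)$ are all missing or incorrect, and these are precisely the places where the paper's argument does real work.
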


We below present two examples, which are application of the above results.
\begin{example}\label{example1}We set for $\alpha>\frac12$, $P=(p(i,j))$, $p(i,j)=\frac{6}{\pi^2 j^2}$, for $i,j\ge 1$, $P_k=(p_k(i,j))$, $p_k(i,i)=\frac{6}{\pi^2 i^2}-\frac{6}{\pi^2 i^2 k^\alpha}$, for $i\ge 1$. $p_k(i,i+1)=\frac{6}{\pi^2 (i+1)^2}+\frac{6}{\pi^2 i^2 k^\alpha}$, $p_k(i,m)=\frac{6}{\pi^2 m^2}$, for $i,m\ge 1$, $|i-m|>1$. $p_k(i,i-1)=\frac{6}{\pi^2 (i-1)^2}$, for $i\ge 2$. Then $\lim_{n\rightarrow\infty}\sup_{m\ge 0}\frac1n\sum_{k=1}^{n}\|P_{k+m}-P\|=\lim_{n\rightarrow\infty}\sup_{m\ge 0,i\ge 1}\frac1n\sum_{k=1}^{n}\frac{12}{\pi^2 i^2 (k+m)^\alpha}=0$, and $\lim_{n\rightarrow\infty}\frac{\sum_{k=1}^{n}\delta(P_k)}{\sqrt{n}}=\lim_{n\rightarrow\infty}\frac{\sum_{k=1}^{n}\frac{12}{\pi^2 k^\alpha}}{\sqrt{n}}=0$, i.e. $(\ref{4})$ and $(\ref{6})$ holds.
\end{example}
\begin{example}\label{example2}We set for $\alpha>\frac12,\beta>0$,  $P=(p(i,j))$, $p(i,j)=\frac{90}{\pi^4 j^4}$, for $i,j\ge 1$, $P_k=(p_k(i,j))$, $p_k(i,i)=\frac{90}{\pi^4 i^4}-\frac{90(\log(k))^{\beta}}{\pi^4 i^4 k^\alpha}$, for $i\ge 1$. $p_k(i,i+1)=\frac{90}{\pi^4 (i+1)^4}+\frac{90(\log(k))^{\beta}}{\pi^4 i^4 k^\alpha}$, $p_k(i,m)=\frac{90}{\pi^4 m^4}$, for $i,m\ge 1$, $|i-m|>1$. $p_k(i,i-1)=\frac{90}{\pi^4 (i-1)^4}$, for $i\ge 2$. Then $\lim_{n\rightarrow\infty}\sup_{m\ge 0}\frac1n\sum_{k=1}^{n}\|P_{k+m}-P\|=\lim_{n\rightarrow\infty}\sup_{m\ge 0,i\ge 1}\frac1n\sum_{k=1}^{n}\frac{180(\log(k+m))^{\beta}}{\pi^4 i^4 (k+m)^\alpha}=0$, and $\lim_{n\rightarrow\infty}\frac{\sum_{k=1}^{n}\delta(P_k)}{\sqrt{n}}=\lim_{n\rightarrow\infty}\frac{\sum_{k=1}^{n}\frac{180(\log(k))^{\beta}}{\pi^4 k^\alpha}}{\sqrt{n}}=0$, i.e. $(\ref{4})$ and $(\ref{6})$ holds.

\end{example}

\begin{rmk}
In Example \ref{example1}, \ref{example2}, we could see a polynomial decay in $p_k(i,i)$.
\end{rmk}

In Section 2, we prove Theorem \ref{thm4}. The ideas of proofs of Theorem \ref{thm4} come from Gao \cite{Gao1996}, Gao, Xu \cite{Gao2012} and Xu et al. \cite{Xu2019,Xu2019b,Xu2020}.
\section{Proof of Theoremand \ref{thm4}}

We first establish two lemmas.
\begin{lem}\label{lem1}{\upshape[ Finite Dimensional Moderate Deviations] } For any $m\ge 1$, $f_1,\cdots,f_m\in \BB_b(\rr)$, any closed set $F\in \BB(\rr^m)$,
$$
\limsup_{n\rightarrow \infty}\frac{n}{a^2(n)}\log \pp\left\{\left(\langle f_1, \Gamma_n-E( \Gamma_n)\rangle,\cdots,\langle f_m, \Gamma_n-E( \Gamma_n)\rangle\right)\in F\right\}\le -\inf_{x\in F}I_{f,m}(x),
$$
and for any open set $G\in \BB(\rr^m)$,
$$
\liminf_{n\rightarrow \infty}\frac{n}{a^2(n)}\log \pp\left\{\left(\langle f_1, \Gamma_n-E( \Gamma_n)\rangle,\cdots,\langle f_m, \Gamma_n-E( \Gamma_n)\rangle\right)\in G\right\}\ge -\inf_{x\in G}I_{f,m}(x),
$$
where
$$
f:=(f_1,\ldots, f_m),
$$
$$I_{f,m}(x):=\sup_{z\in \rr^m}\left\{\langle x,z\rangle-\frac12\sum_{i\in S}\pi(i)\left[\left(\sum_{l=1}^{m}z_{l}f_{l}(i)\right)^2-\left(\sum_{j\in S}\sum_{l=1}^{m}z_{l}f_{l}(j)p(i,j)\right)^2\right]\right\}.$$
\end{lem}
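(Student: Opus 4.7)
The plan is to establish the finite-dimensional MDP via the Gärtner--Ellis theorem in $\rr^m$, applied to $Y_n:=(\langle f_l,\Gamma_n-E(\Gamma_n)\rangle)_{l=1}^m$. Rewriting
$$\frac{a^2(n)}{n}\langle z,Y_n\rangle = \frac{a(n)}{n}\sum_{k=1}^n\bigl(g_z(X_k)-E g_z(X_k)\bigr),\qquad g_z:=\sum_{l=1}^m z_l f_l\in\BB_b(\rr),$$
shows that the normalized logarithmic moment generating function
$$\Lambda(z):=\lim_{n\to\infty}\frac{n}{a^2(n)}\log E\exp\!\left\{\frac{a^2(n)}{n}\langle z,Y_n\rangle\right\}$$
is precisely the one-dimensional log-MGF that appears implicitly in the proof of Theorem \ref{thm2}, but with the bounded function $f$ replaced by the bounded function $g_z$. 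My strategy is therefore to repeat that one-dimensional argument in the direction $z$ for every $z\in\rr^m$, obtain a smooth $\Lambda$, and then invoke Gärtner--Ellis.

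Concretely, following the martingale-coboundary decomposition of Xu et al.\ \cite{Xu2019,Xu2019b} that underlies Theorem \ref{thm1}, the periodic strong ergodicity of $P$ allows one to construct a bounded $h_z\colon S\to\rr$, essentially a Poisson-type solution $h_z-Ph_z=g_z-\pi(g_z)$, so that
$$\sum_{k=1}^n\bigl(g_z(X_k)-E g_z(X_k)\bigr) = M_n + R_n,$$
where $M_n$ is a martingale with respect to $\FF_k=\sigma(X_0,\ldots,X_k)$ with bounded increments and $R_n$ is a perturbation remainder whose size is controlled by $\sum_{k\le n}\|P_k-P\|$ and $\sum_{k\le n}\delta(P_k)$; hypotheses \eqref{4} and \eqref{6} then force $R_n$ to be superexponentially negligible on the scale $a^2(n)/n$. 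Using the boundedness of the martingale increments together with $a(n)/n\to 0$, a second-order Taylor expansion of $\log E[\exp\{\frac{a(n)}{n}\Delta M_k\}\mid\FF_{k-1}]$ reduces the log-MGF of $M_n$ to $\frac{a^2(n)}{2n^2}\langle M\rangle_n$ plus negligible terms, and \eqref{4} combined with the periodic strong ergodicity yields that the Cesàro average $n^{-1}\langle M\rangle_n$ converges to $\theta(g_z)$ in an $L^1$-sense sufficient for the log-MGF limit. This produces
$$\Lambda(z)=\frac{1}{2}\theta(g_z)=\frac{1}{2}\sum_{i\in S}\pi(i)\left[\Bigl(\sum_{l=1}^m z_l f_l(i)\Bigr)^{\!2}-\Bigl(\sum_{j\in S}\sum_{l=1}^m z_l f_l(j)p(i,j)\Bigr)^{\!2}\right],$$
a quadratic form on $\rr^m$, hence everywhere finite, convex, and $C^\infty$. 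The Gärtner--Ellis theorem in $\rr^m$ then delivers both the MDP upper and lower bounds with rate function $\Lambda^*$, and an immediate inspection shows $\Lambda^*=I_{f,m}$.

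The main obstacle will be the construction and exponential control of $R_n$ in the countable state setting: in the finite-state proof of Xu et al.\ \cite{Xu2020} the Poisson solution $h_z$ is trivially bounded and the error estimates are uniform over states, whereas here one must check that periodic strong ergodicity of $P$ together with \eqref{4} still yields a uniformly bounded $h_z$ on all of $S$ and that the remainder estimates, which involve summing row-norm differences of infinite stochastic matrices, survive the passage to exponential scale $a^2(n)/n$. Once this is handled the remaining steps follow the Gao \cite{Gao1996} scheme, and the non-degeneracy condition \eqref{5} required in Theorem \ref{thm1} is not needed at this stage: $\Lambda$ is allowed to be degenerate, in which case the quadratic form is merely semidefinite and simply flattens the corresponding directions of $I_{f,m}$.
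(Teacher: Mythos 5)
Your high-level strategy coincides with the paper's: both compute the normalized log-moment generating function $\Lambda(z)=\tfrac12\theta(g_z)$ for $g_z=\sum_{l=1}^m z_lf_l$ and then apply the G\"artner--Ellis theorem in $\rr^m$, identifying $\Lambda^*=I_{f,m}$ (and, like the paper, you correctly note that the nondegeneracy condition (\ref{5}) is not needed here). The gap is in the decomposition you use to reach $\Lambda$. You go through a Poisson-type equation $h_z-Ph_z=g_z-\pi(g_z)$ for the \emph{limiting} kernel and claim the resulting remainder $R_n$, ``controlled by $\sum_{k\le n}\|P_k-P\|$ and $\sum_{k\le n}\delta(P_k)$,'' is superexponentially negligible at scale $a^2(n)/n$. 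That does not follow from the standing hypotheses: (\ref{4}) is only a uniform Ces\`aro statement, so it gives $\sum_{k\le n}\|P_k-P\|=o(n)$ and nothing stronger, whereas negligibility at the moderate deviation scale requires a bound of order $o(a(n))$ with $a(n)=o(n)$. For instance, with $P$ having all rows equal to $\pi$ (as in Example \ref{example1}) and each $P_k$ having all rows equal to some $q_k$ with $\|q_k-\pi\|\asymp k^{-1/4}$, one has $\delta(P_k)=0$ (so (\ref{6}) is trivial) and (\ref{4}) holds, yet $\sum_{k\le n}\|P_k-P\|\asymp n^{3/4}\gg a(n)$ when, say, $a(n)=n^{0.6}$; so a bound by $\sum_{k\le n}\|P_k-P\|$ cannot by itself kill $R_n$, and rescuing the coboundary route would require exhibiting cancellations between $\sum_k(P_k-P)h_z(X_{k-1})$ and the centering terms that you have not supplied. (A secondary wrinkle: for a \emph{periodic} strongly ergodic $P$ the series defining $h_z$ does not converge without Ces\`aro or cyclic-class arguments, so even the bounded solution $h_z$ needs a separate construction.)

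The paper sidesteps all of this by using conditional centering rather than a coboundary: with $D_{k,g_z}=g_z(X_k)-E[g_z(X_k)|X_{k-1}]$ and $W_{n,g_z}=\sum_{k=1}^nD_{k,g_z}$ one has $\sum_{k=1}^n\bigl(g_z(X_k)-Eg_z(X_k)\bigr)=W_{n,g_z}+\sum_{k=1}^n\bigl(E[g_z(X_k)|X_{k-1}]-E[g_z(X_k)]\bigr)$, and the non-martingale part is bounded pointwise by a constant times $\sum_{k\le n}\delta(P_k)=o(\sqrt n)=o(a(n))$ thanks to (\ref{6}) and (\ref{8}); hypothesis (\ref{4}) enters only through the variance asymptotics $\frac1n\sum_{k=1}^nE(D_{k,g_z}^2)\to\theta(g_z)$ (equations (2.15) and (2.17) of Xu et al.\ \cite{Xu2019}), after which the martingale log-MGF limit is taken from the computation (2.6) of Xu et al.\ \cite{Xu2020} and G\"artner--Ellis concludes exactly as you propose. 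If you replace your Poisson-equation step by this conditional-centering decomposition (or otherwise prove genuine $o(a(n))$ control of your $R_n$), the rest of your argument goes through.
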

\begin{proof}As in Gao and Xu \cite{Gao2012}, set $Y_j:=\left(f_1(X_j)-E(f_1(X_j)),\cdots,f_m(X_j)-E(f_m(X_j))\right)$.
Then by Fubini's theorem,
$$
\left(\langle f_1, \Gamma_n-E( \Gamma_n)\rangle,\cdots,\langle f_m, \Gamma_n-E( \Gamma_n)\rangle\right)=\frac1{a(n)}(Y_1+\cdots+Y_n).
$$
$Y_1$ is bounded. As in Xu et al. \cite{Xu2019}, write
$D_{n,f_i}=f_i(X_n)-E[f_i(X_n)|X_{n-1}], n\ge 1, \mbox{  } D_{0,f_i}=0$,
$W_{n,f_i}=\sum_{k=1}^{n}D_{k,f_i}$. Write $\FF_n=\sigma(X_k,0\le k\le n)$. Then $\{W_{n,f_i},\FF_n,n\ge1\}$ is a martingale, so that $\{D_{n,f_i},\FF_n,n\ge 0\}$ is the related martingale difference. For any $z=(z_1,\cdots,z_m)\in \rr^m$,
\begin{equation}\label{2.1}
\langle \sum_{i=1}^{n}Y_i,z\rangle=W_{n,\sum_{l=1}^{m}z_lf_l}+\sum_{k=1}^{n}[E[\sum_{l=1}^{m}z_lf_l(X_k)|X_{k-1}]-E[\sum_{l=1}^{m}z_lf_l(X_k)]].
\end{equation}
By (2.17) and (2.15) in Xu et al. \cite{Xu2019} (cf. also Xu et al. \cite{Xu2019b}) with $\sum_{l=1}^{m}z_lf_l$ in place of $f$, we have
\begin{equation}\label{2.2}
\lim_{n\rightarrow\infty}\frac1{\sqrt{n}} \sum_{k=1}^{n}[E[\sum_{l=1}^{m}z_lf_l(X_k)|X_{k-1}]-E[\sum_{l=1}^{m}z_lf_l(X_k)]]=0.
\end{equation}
\begin{equation}\label{2.3}
\lim_{n\rightarrow\infty}\frac{ \sum_{k=1}^{n}E(D_{k,\sum_{l=1}^{m}z_lf_l}^2)}{n}=\theta(\sum_{l=1}^{m}z_lf_l)
\end{equation}

As in the proof of (2.6) in Xu et al. \cite{Xu2020}, by (\ref{2.1})-(\ref{2.3}), we could deduce that for any $z=(z_1,\cdots,z_m)\in \rr^m$,

\begin{align*}
&\lim_{n\rightarrow\infty}\frac{n}{a^2(n)}\log E\left[\exp\left[\frac{a(n)}{n}\langle\sum_{i=1}^{n}Y_i,z\rangle\right]\right]\\
&=\lim_{n\rightarrow\infty}\frac{n}{a^2(n)}\log E\left[\exp\left[\frac{a(n)}{n} W_{n,\sum_{l=1}^{m}z_lf_l}\right]\right]\\
&=\frac{\theta(\sum_{l=1}^{m}z_lf_l)}{2}=\frac12\sum_{i\in S}\pi(i)\left[\left(\sum_{l=1}^{m}z_lf_l(i)\right)^2-\left(\sum_{j\in S}\sum_{l=1}^{m}z_lf_l(j)p(i,j)\right)^2\right],
 \end{align*}
 where $\theta(\cdot)$ is defined by (\ref{5}).
We use G\"{a}rtner-Ellis theorem (see Theorem 2.3.6 of Dembo and Zeitouni \cite{Dembo1998} or Theorem 1.4 of Yan et al. \cite{Wu1997} p. 276) to prove Lemma \ref{lem1}.
This completes the proof.
\end{proof}

\begin{lem}\label{lem2} For any $m\ge 1$, $f_1,\cdots,f_m\in \BB_b(\rr)$, define
$$
p_{f_1,\cdots,f_m}:M(\rr)\ni\nu\mapsto \left(\langle f_1,\nu\rangle,\cdots,\langle f_m,\nu\rangle\right)\in\rr^m.
$$
Then
$$
I_{f,m}(y)=\inf_{\nu=p_{f_1,\cdots,f_m}^{-1}(y)}I_{\mu}(\nu).
$$

\end{lem}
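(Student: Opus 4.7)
Both rate functions are Legendre transforms of nonnegative quadratic forms. Writing
$$\Lambda(f):=\tfrac12\sum_{i\in S}\pi(i)\Bigl[f^2(i)-\Bigl(\sum_{j\in S}f(j)p(i,j)\Bigr)^2\Bigr],\qquad f\in\BB_b(\rr),$$
and $\Lambda_m(z):=\Lambda\bigl(\sum_{l=1}^{m}z_l f_l\bigr)$ for $z\in\rr^m$, the definitions in Theorem \ref{thm3} and Lemma \ref{lem1} read $I_\mu(\nu)=\sup_f\{\langle f,\nu\rangle-\Lambda(f)\}$ and $I_{f,m}(y)=\sup_z\{\langle z,y\rangle-\Lambda_m(z)\}$. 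The plan is to prove the two inequalities separately.

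The inequality $\inf_{\nu\in p_{f_1,\cdots,f_m}^{-1}(y)}I_\mu(\nu)\ge I_{f,m}(y)$ is immediate: for any $\nu$ with $p_{f_1,\cdots,f_m}(\nu)=y$ and any $z\in\rr^m$, testing the variational definition of $I_\mu(\nu)$ against $f=\sum_{l=1}^m z_l f_l\in\BB_b(\rr)$ gives $I_\mu(\nu)\ge\langle z,y\rangle-\Lambda_m(z)$ since $\langle f,\nu\rangle=\langle z,y\rangle$; a sup over $z$ closes the estimate.

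For the reverse direction, assume $I_{f,m}(y)<\infty$ (otherwise there is nothing to prove). Because $\Lambda_m$ is a nonnegative quadratic form on $\rr^m$, the supremum is attained at some $z^*$ with $\nabla\Lambda_m(z^*)=y$, and Euler's relation $\langle z^*,y\rangle=2\Lambda_m(z^*)$ gives $I_{f,m}(y)=\Lambda_m(z^*)$. Put $f^*:=\sum_{l=1}^m z_l^* f_l$ and introduce the atomic candidate minimiser
$$\nu^*(\{k\}):=\pi(k)f^*(k)-\sum_{i\in S}\pi(i)p(i,k)\sum_{j\in S}f^*(j)p(i,j),\qquad k\in S,$$
extended as the zero measure off $S$. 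Since $\sum_{i\in S}\pi(i)p(i,k)=\pi(k)$ (from $\pi P=\pi$) and $|f^*|\le\|f^*\|_\infty$, the total variation of $\nu^*$ is bounded by $2\|f^*\|_\infty$, so $\nu^*\in M(\rr)$. A direct computation identifies $\nu^*$ with the Fr\'echet derivative of the quadratic form $\Lambda$ at $f^*$, meaning
$$\langle g,\nu^*\rangle=\left.\frac{d}{d\varepsilon}\Lambda(f^*+\varepsilon g)\right|_{\varepsilon=0}\qquad\text{for every }g\in\BB_b(\rr).$$
Choosing $g=f_l$ gives $\langle f_l,\nu^*\rangle=\partial_{z_l}\Lambda_m(z^*)=y_l$, hence $p_{f_1,\cdots,f_m}(\nu^*)=y$. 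Expanding the homogeneous quadratic $\Lambda$ yields $\langle f^*+g,\nu^*\rangle-\Lambda(f^*+g)=\Lambda_m(z^*)-\Lambda(g)\le\Lambda_m(z^*)$ for every $g\in\BB_b(\rr)$, with equality at $g=0$; therefore $I_\mu(\nu^*)=\Lambda_m(z^*)=I_{f,m}(y)$, which completes the argument.

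The hard part is the explicit construction of $\nu^*$ and the verification that it belongs to $M(\rr)$; once that is in hand, the duality calculation is automatic because of the quadratic structure of $\Lambda$. A minor subtlety is that the test functions in the definition of $I_\mu(\nu)$ range over all of $\BB_b(\rr)$ rather than merely over $\ell^\infty(S)$, but this is harmless because both $\Lambda(f)$ and $\langle f,\nu^*\rangle$ depend only on the restriction $f|_S$.
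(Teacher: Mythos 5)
Your proof is correct, but it follows a genuinely different route from the paper. The paper defines $\tilde{I}_{f,m}(y)=\inf_{p_{f_1,\cdots,f_m}(\nu)=y}I_{\mu}(\nu)$, checks that it is lower semicontinuous (using the shrinking open neighborhoods $G_{\epsilon}(y)$, which leans on the $\tau$-compactness of the level sets of $I_{\mu}$ from Theorem \ref{thm3}), computes its Legendre transform to be the quadratic form $\Lambda_m(x)$ by exchanging the supremum over $\nu$ with the infimum over fibers, and then invokes Fenchel biduality twice to conclude $I_{f,m}=\tilde{I}_{f,m}$; convexity of $\tilde I_{f,m}$ (inherited from convexity of $I_\mu$ and linearity of $p_{f_1,\cdots,f_m}$) is what makes the second application of Fenchel's theorem legitimate. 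You instead exploit the quadratic structure directly: the easy inequality by testing $I_\mu$ against $\sum_l z_l f_l$, and the hard inequality by exhibiting an explicit measure $\nu^*=Q(f^*,\cdot)$ (the derivative of $\Lambda$ at the optimal $f^*=\sum_l z_l^* f_l$), checking $\nu^*\in M(\rr)$, that it lies in the fiber, and that $I_\mu(\nu^*)=\Lambda_m(z^*)=I_{f,m}(y)$ via the expansion $\langle f^*+g,\nu^*\rangle-\Lambda(f^*+g)=\Lambda(f^*)-\Lambda(g)$ together with $\Lambda(g)\ge 0$ (which is exactly the second expression for $I_\mu$ in Theorem \ref{thm3}). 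Your argument is more elementary and self-contained — it avoids Fenchel duality, lower semicontinuity, and the compactness of level sets altogether — and it yields strictly more, namely attainment of the infimum with an explicit minimizer; its only cost is that it is tied to the quadratic form, whereas the paper's duality argument would survive for any convex good rate function given as a Legendre transform. The one place you assert a fact without proof — that finiteness of $I_{f,m}(y)$ forces the supremum to be attained at some $z^*$ with $\nabla\Lambda_m(z^*)=y$ — is standard for nonnegative quadratic forms (finiteness forces $y$ to lie in the range of the associated matrix, and concavity makes any solution of $\nabla\Lambda_m(z)=y$ a maximizer), and is worth one explicit line.
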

\begin{proof}
Set $\tilde{I}_{f,m}(y)=\inf_{p_{f_1,\cdots,f_m}(\nu)=y}I_{\mu}(\nu)$ and write $G_{\epsilon}(y)=\{\nu\in M(\rr);p_{f_1,\cdots,f_m}(\nu)\in B(y,\epsilon)\}$
 for $\epsilon>0$, where $B(y,\epsilon)=\{x:|x-y|<\epsilon\}$.  Then $\{\nu\in M(\rr); p_{f_1,\cdots,f_m}(\nu)=y\}$ is closed, $G_{\epsilon}(y)$ is open and $G_{\epsilon}(y)\downarrow \{\nu\in M(\rr); p_{f_1,\cdots,f_m}(\nu)=y\}$ as $\epsilon\downarrow 0$. Hence
 $$
 \tilde{I}_{f,m}(y)=\lim_{\epsilon\rightarrow 0}\inf_{\nu\in G_{\epsilon}(y)}I_{\mu}(\nu)\le \liminf_{y^{\prime} \rightarrow y}\tilde{I}_{f,m}(y^{\prime}),
 $$
 that is, $\tilde{I}_{f,m}(y)$ is lower semicontinuous. By Fenchel's theorem (cf. Strook and Deuschel \cite{STROOK2001}),
\begin{align*}
 &\frac12\sum_{i\in S}\pi(i)\left[\left(\sum_{l=1}^{m}x_lf_l(i)\right)^2-\left(\sum_{j\in S}\sum_{l=1}^{m}x_lf_l(j)p(i,j)\right)^2\right]\\
 &=\sup\left\{\int \sum_{l=1}^{m}x_lf_l(t)\nu(\dif t) -I_{\mu}(\nu) \right\}=\sup_{y\in \rr^m}\left\{\langle x,y\rangle- \tilde{I}_{f,m}(y)\right\}.
\end{align*}
Using Fenchel's theorem again, we have
$$
I_{f,m}(y)=\tilde{I}_{f,m}(y)=\inf_{p_{f_1,\cdots,f_m}(\nu)=y}I_{\mu}(\nu).
$$

\end{proof}

 By Theorem 2.7 in Yan et al. \cite{Wu1997} p. 290 and Theorem 2.1 in Yan et al. \cite{Wu1997} p. 284, we also concludes that Theorem \ref{thm4} holds from Lemmas \ref{2.1} and \ref{2.2}.

{\upshape Proof of Theorem \ref{4}:}  It remains to show that Large deviation upper bounds and lower bounds hold. For any $\nu_0\in M(\rr)$, define
$$
\UU_{\nu_0}=\left\{U_{f_1,\cdots,f_m}(\nu_0,\varepsilon):f_1,\cdots,f_m\in \BB_b(\rr),m\ge1,\varepsilon>0\right\},
$$
where $U_{f_1,\cdots,f_m}(\nu_0,\varepsilon)=\left\{\nu\in M(\rr):\left(\sum_{i=1}^{m}|\langle f_i,\nu-\nu_0\rangle|^2\right)^{1/2}<\varepsilon\right\}$.
Then $\UU_{\nu_0}$ is a basis of neighborhoods of $\nu_0$. Write $x_0=(\langle f_1,\nu_0\rangle,\cdots,\langle f_m,\nu_0\rangle)$ and $B(x_0,\varepsilon)=\{x:|x-x_0|<\varepsilon\}$. Then
\begin{align*}
&\pp\left\{\Gamma_n-E(\Gamma_n)\in U_{f_1,\cdots,f_m}(\nu_0,\varepsilon)\right\}\\
&=\pp\left\{\left(\langle f_1,\Gamma_n-E(\Gamma_n)\rangle,\cdots,\langle f_m,\Gamma_n-E(\Gamma_n)\rangle\right)\in B(x_0,\varepsilon)\right\},
 \end{align*}
 and
  \begin{align*}
  &I_{f,m}(x_0)=\sup_{z\in\rr^m}\left\{\langle \sum_{k=1}^{m}z_kf_k,\nu_0\rangle-\frac12\sum_{i\in S}\pi(i)\left[\left(\sum_{l=1}^{m}z_lf_l(i)\right)^2-\left(\sum_{j\in S}\sum_{l=1}^{m}z_lf_l(j)p(i,j)\right)^2\right]\right\}
  \end{align*}
  Obviously we see that
  \begin{align*}
  &I_{\mu}(\nu_0)=\sup_{m\ge 1}\sup_{f_1,\cdots,f_m\in \BB_b(\rr)}\sup_{z\in \rr^m}\left\{\langle \sum_{k=1}^{m}z_kf_k,\nu_0\rangle-\frac12\sum_{i\in S}\pi(i)\left[\left(\sum_{l=1}^{m}z_lf_l(i)\right)^2\right.\right.\\
  &\left.\left.-\left(\sum_{j\in S}\sum_{l=1}^{m}z_lf_l(j)p(i,j)\right)^2\right]\right\}.
  \end{align*}

  {\upshape Upper Bounds:}  For any closed set $F\subset M(\rr)$, write $L_F=\inf_{\nu\in F}I_{\mu}(\nu)$. Without loss of generality, we assume that $L_F>0$. For any
  $l\in (0,L_F)$, set $K_l=\{\nu\in M(\rr):I_{\mu}(\nu)\le l\}$ such that $K_l\bigcap F=\emptyset$. We obtain
  $$
  F=\bigcap_{f_1,\cdots,f_m\in \BB_b(\rr),m\ge 1}p_{f_1,\cdots,f_m}^{-1}\left(\overline{p_{f_1,\cdots,f_m}(F)}\right).
  $$
  By the compactness of $K_l$, there exists $f_1,\cdots,f_m\in \BB_b(\rr)$ such that
  $$p_{f_1,\cdots,f_m}^{-1}\left(\overline{p_{f_1,\cdots,f_m}(F)}\right)\bigcap K_l=\emptyset.$$
   Hence, by Lemmas \ref{2.1} and \ref{2.2},
  \begin{align*}
  &\limsup_{n\rightarrow\infty}\frac{n}{a^2(n)}\log\pp\left\{\Gamma_n-E(\Gamma_n)\in F\right\}\\&\le  \limsup_{n\rightarrow\infty}\frac{n}{a^2(n)}\log\pp \left\{\Gamma_n-E(\Gamma_n)\in p_{f_1,\cdots,f_m}^{-1}\left(\overline{p_{f_1,\cdots,f_m}(F)}\right)\right\}\\
  &\le \limsup_{n\rightarrow\infty}\frac{n}{a^2(n)}\log\pp \left\{p_{f_1,\cdots,f_m}(\Gamma_n-E(\Gamma_n))\in\overline{p_{f_1,\cdots,f_m}(F)}\right\}\\
  &\le -\inf_{x\in \overline{p_{f_1,\cdots,f_m}(F)}}I_{f,m}(x)=-\inf_{\nu\in p_{f_1,\cdots,f_m}^{-1}\overline{p_{f_1,\cdots,f_m}(F)}}I_{\mu}(\nu)\le -l.
  \end{align*}
  Letting $l\rightarrow L_F$, the proof of the large deviations upper bounds is complete.

{\upshape Lower Bounds:} For any open set $G\subset M(\rr)$, for any $\nu_0\in G$, choosing a neighborhood
$U_{f_1,\cdots,f_m}(\nu_0,\varepsilon)\subset G$. Then by Lemmas \ref{2.1} and \ref{2.2},
\begin{align*}
&\liminf_{n\rightarrow\infty}\frac{n}{a^2(n)}\log\pp\left\{\Gamma_n-E(\Gamma_n)\in G\right\}\ge \liminf_{n\rightarrow\infty}\frac{n}{a^2(n)}\log\pp\left\{\Gamma_n-E(\Gamma_n)\in U_{f_1,\cdots,f_m}(\nu_0,\varepsilon)\right\}\\
&=\liminf_{n\rightarrow\infty}\frac{n}{a^2(n)}\log\pp\left\{\left(\langle f_1,\Gamma_n-E(\Gamma_n)\rangle,\cdots,\langle f_m,\Gamma_n-E(\Gamma_n)\rangle\right)\in B(x_0,\varepsilon)\right\}\\
&\ge -I_{f,m}(x_0)\ge -I_{\mu}(\nu_0).
 \end{align*}
 This completes the proof .
 \begin{rmk}
The idea of proofs of Theorem \ref{thm4} comes from Xu et al. (\cite{Xu2019,Xu2019b}), Gao \cite{Gao1996}, and Gao and Xu \cite{Gao2012}. The key difference between the proofs of Theorem \ref{thm4} and that of Gao and Xu \cite{Gao2012} is that we could not directly use G\"artner-Ellis theorem. We need to combine Theorem 1.1 in Gao \cite{Gao1996}, the results of Xu et al. (\cite{Xu2019,Xu2019b}), and the method of Gao and Xu \cite{Gao2012} to conjecture and obtain Theorem \ref{thm4}.
 \end{rmk}

\end{document}